\newcommand{\T}{\ensuremath{\mathbb{T}}}
\newcommand{\R}{\ensuremath{\mathbb{R}}}
\newcommand{\N}{\ensuremath{\mathbb{N}}}
\newcommand{\Z}{\ensuremath{\mathbb{Z}}}
\theoremstyle{plain}
\newtheorem{theorem}{Theorem}
\newtheorem{cor}[theorem]{Corollary}
\newtheorem{lemma}[theorem]{Lemma}
\newtheorem{remark}[theorem]{Remark}
\newtheorem{example}[theorem]{Example}
\numberwithin{equation}{section}
\numberwithin{theorem}{section}
\begin{document}

\title[Young's Integral Inequality with Bounds]{Young's integral inequality with upper and lower bounds}

\author[Anderson]{Douglas~R.~Anderson}
\address{Concordia College, Department of Mathematics and Computer Science, Moorhead, MN 56562 USA}
\email{andersod@cord.edu}
\urladdr{http://www.cord.edu/faculty/andersod/} 

\author[Noren]{Steven Noren}
\address{Concordia College, Moorhead, MN 56562 USA}
\email{srnoren@cord.edu}

\author[Perreault]{Brent Perreault}
\address{Concordia College,  Moorhead, MN 56562 USA}
\email{bmperrea@cord.edu}

\keywords{Young's inequality, monotone functions, Pochhammer lower factorial, difference equations, time scales.} 
\subjclass[2000]{26D15, 39A12, 34N05}

\begin{abstract} 
Young's integral inequality is reformulated with upper and lower bounds for the remainder. The new inequalities improve Young's integral inequality on all time scales, such that the case where equality holds becomes particularly transparent in this new presentation. The corresponding results for difference equations are given, and several examples are included. We extend these results to piecewise-monotone functions as well.
\end{abstract}

\maketitle\thispagestyle{empty}

%%%%%%%%%%%%%
%           %
% SECTION 1 %
%           %
%%%%%%%%%%%%%

\section{introduction}

In 1912, Young \cite{young} presented the following highly intuitive integral inequality, namely that any real-valued continuous function $f:[0,\infty)\rightarrow[0,\infty)$ satisfying $f(0)=0$ with $f$ strictly increasing on $[0,\infty)$ satisfies
\begin{equation}\label{yeq}
 ab \le \int_0^a f(t)dt + \int_0^b f^{-1}(y)dy
\end{equation}
for any $a,b\in[0,\infty)$, with equality if and only if $b=f(a)$. A useful consequence of this theorem is Young's inequality,
$$ ab \le \frac{a^p}{p}+\frac{b^q}{q}, \qquad \frac{1}{p}+\frac{1}{q}=1, $$
with equality if and only if $a^p=b^q$, a fact derived from \eqref{yeq} by taking $f(t)=t^{p-1}$ and $q=\frac{p}{p-1}$. Hardy, Littlewood, and P\'{o}lya included Young's inequality in their classic book \cite{hardy}, but there was no analytic proof until Diaz and Metcalf \cite{diaz} supplied one in 1970. Tolsted \cite{tolsted} showed how to derive Cauchy, H\"{o}lder, and Minkowski inequalities in a straightforward way from \eqref{yeq}. For many other applications and extensions of Young's inequality, see Mitrinovi\'{c}, Pe\v{c}ari\'{c}, and Fink \cite{fink}. For the purposes of this paper we recall some results that consider upper bounds for the integrals in \eqref{yeq}. Merkle \cite{merkle} established the inequality
$$  \int_0^a f(t)dt + \int_0^b f^{-1}(y)dy \le \max\{af(a),bf^{-1}(b)\}, $$ 
which has been improved and reformulated recently by Minguzzi \cite{em} to the inequality
\begin{equation}\label{emorig}
 0 \le \int_{\alpha_1}^a f(t)dt + \int_{\beta_1}^b f^{-1}(y)dy - ab + \alpha_1\beta_1 \le \left(f^{-1}(b)-a\right)\left(b-f(a)\right), 
\end{equation}
where the hypotheses of Young's integral inequality hold, except that $f(\alpha_1)=\beta_1$ has replaced $f(0)=0$.

One might wonder if there is a discrete version of \eqref{yeq} in the form of a summation inequality, or more generally a time-scale version of \eqref{yeq}, where a time scale, introduced by Hilger \cite{hilger}, is any nonempty closed set of real numbers. Wong, Yeh, Yu, and Hong \cite{wyyh} presented a version of Young's inequality on time scales $\T$ in the following form. Using the standard notation \cite{bp1} of the left jump operator $\rho$ given by $\rho(t):=\sup\{s\in\T: s<t\}$, the right jump operator $\sigma$ given by $\sigma(t)=\inf\{s\in\T: s>t\}$, the compositions $f\circ\rho$ and $f\circ\sigma$ denoted by $f^\rho$ and $f^\sigma$, respectively, the graininess functions defined by $\mu(t)=\sigma(t)-t$ and $\nu(t)=t-\rho(t)$, and the delta and nabla derivatives of $f$ at $t\in\T$, denoted $f^\Delta(t)$ and $f^\nabla(t)$, respectively, (provided they exist) are given by
$$ f^\Delta(t):=\lim_{s\rightarrow t}\frac{f^\sigma(t)-f(s)}{\sigma(t)-s}, \qquad  f^\nabla(t):=\lim_{s\rightarrow t}\frac{f^\rho(t)-f(s)}{\rho(t)-s}, $$
we have the following result.

\begin{theorem}[Wong, Yeh, Yu, and Hong]\label{amlthm}
Let $f$ be right-dense continuous on $[0,c]_\T:=[0,c]\cap\T$ for $c>0$, strictly increasing, with $f(0)=0$. Then for $a\in[0,c]_\T$ and $b\in[0,f(c)]_\T$ the inequality
$$ ab \le \int_{0}^{a}f^\sigma(t)\Delta t + \int_{0}^{b}(f^{-1})^\sigma(y)\Delta y $$
holds.
\end{theorem}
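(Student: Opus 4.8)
The plan is to prove the time-scale Young inequality by a geometric/area-comparison argument, mirroring the classical proof of \eqref{yeq} but adapted to the delta-integral setting. The two delta integrals $\int_0^a f^\sigma(t)\Delta t$ and $\int_0^b (f^{-1})^\sigma(y)\Delta y$ represent, respectively, the area "under" the graph of $f$ over $[0,a]_\T$ and the area "under" the graph of $f^{-1}$ over $[0,b]_\T$. Since $f^{-1}$ is the reflection of $f$ across the diagonal $y=x$, the second integral represents the area "to the left" of the graph of $f$. The strategy is to show that these two regions together cover the rectangle $[0,a]\times[0,b]$, so that their combined area is at least $ab$.

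Let me think about this more carefully. The classical continuous proof observes that the rectangle $[0,a]\times[0,b]$ is contained in the union of the region under the curve (of area $\int_0^a f$) and the region left of the curve (of area $\int_0^b f^{-1}$), with overlap only along the curve. On a time scale the subtlety is that $f^\sigma$ appears in the integrand, which is exactly the adjustment needed so that the delta integral correctly computes these areas as sums of rectangular strips. The area under $f$ over a single step $[t,\sigma(t))$ is the rectangle of width $\mu(t)$ and height $f^\sigma(t)=f(\sigma(t))$, contributing $\mu(t) f^\sigma(t)$, which is precisely the term picked up by the delta integral of $f^\sigma$. So the choice of integrand makes the Riemann-sum interpretation of area exact even at scattered points.

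**First I would** establish the right geometric identity. I would partition $[0,a]_\T$ and consider the "staircase" region $S_1 = \{(t,y): 0 \le y \le f^\sigma(t)\}$ whose delta-area is $\int_0^a f^\sigma(t)\Delta t$, and similarly the reflected staircase region $S_2$ for $f^{-1}$ whose area is $\int_0^b (f^{-1})^\sigma(y)\Delta y$. **The key step** is to verify that $[0,a]\times[0,b] \subseteq S_1 \cup S_2$: given a point $(t,y)$ in the rectangle, if $y \le f^\sigma(t)$ it lies in $S_1$; otherwise $y > f^\sigma(t) \ge f(t)$, and by strict monotonicity of $f^{-1}$ one shows $t < (f^{-1})^\sigma(y)$, placing $(t,y)$ in $S_2$. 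Summing the areas of the covering strips and comparing with the rectangle area $ab$ then yields the inequality. An alternative, fully computational route is to proceed by induction on the number of points in a finite time scale, or to use the time-scale integration-by-parts formula together with the substitution $y = f(t)$; I would keep this as a fallback in case the covering argument requires delicate handling of overlaps.

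**The hard part will be** making the area-covering argument rigorous on a general time scale, where the interaction between right-scattered points of $\T$ (where $f$ jumps) and the reflected grid for $f^{-1}$ is not automatic. Specifically, one must confirm that the overcounting along the "diagonal" is controlled and that the union genuinely contains the full rectangle even when $a$ or $b$ is right-scattered and $b \ne f(a)$. Handling the boundary strips correctly—ensuring that using $f^\sigma$ rather than $f$ exactly accounts for the jump contributions without double-counting—is where the main care is needed, and it is precisely this subtlety that the later refined inequalities with upper and lower bounds (in the spirit of \eqref{emorig}) are designed to quantify.
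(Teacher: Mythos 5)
First, a point of orientation: the paper itself never proves Theorem \ref{amlthm}; it is quoted from Wong--Yeh--Yu--Hong \cite{wyyh} as motivation, and the proofs the paper does supply (Theorem \ref{yng3} and its consequences) are analytic, in the style of Diaz--Metcalf \cite{diaz}: form the remainder $F(a,b)$, reduce to the case $b=f(a)$ by monotonicity of the inner integral, show that $\varphi(a)=F(a,f(a))$ has vanishing delta derivative at right-scattered and right-dense points alike, and conclude $\varphi\equiv 0$ from uniqueness for initial value problems. Your covering argument is therefore a genuinely different route, and its core is sound. The delta integral $\int_0^a f^\sigma(t)\,\Delta t$ is exactly the Lebesgue area of the staircase region obtained by extending the integrand as the constant $f(\sigma(t))$ over each gap $[t,\sigma(t))$ --- the right-endpoint height is what makes this identification exact, as you note --- and the covering step goes through for \emph{real} points of the rectangle, which is how it must be phrased: if $(x,y)\in[0,a)\times[0,b)$ and $y>f(\sigma(t))$, where $t=\max\{u\in\T:u\le x\}$, put $s=\max\{v\in\T:v\le y\}$; then $\sigma(s)\ge y>f(\sigma(t))$, so every $u\in\T$ with $f(u)\ge\sigma(s)$ satisfies $u>\sigma(t)$ by strict monotonicity, whence the staircase value $f^{-1}(\sigma(s))$ of $(f^{-1})^\sigma$ at $y$ is at least $\sigma(t)\ge x$, placing $(x,y)$ in the reflected staircase. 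Also, your worry about controlling the overcounting is unnecessary: for this one-sided inequality overlap only helps, since $ab\le|S_1\cup S_2|\le|S_1|+|S_2|$. What the geometric route buys is brevity and intuition; what the paper's analytic machinery buys is the sharp equality characterization ($b\in\{f^\rho(a),f(a)\}$ in Theorem \ref{yng3}) and the two-sided bounds of Theorem \ref{newthm1}, which a covering argument cannot see precisely because it throws the overlap away. (Positivity of $f$ and $f^{-1}$, needed for the area interpretation, comes from $f(0)=0$ and strict increase; say this explicitly.)

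The one genuine gap you should close is definitional, and it is inherited from the statement itself: the second integral evaluates $(f^{-1})^\sigma$ on $[0,b]_\T$, but $f^{-1}$ is a priori defined only on $f(\T)$, which need not contain $[0,f(c)]_\T$, nor need $f(\T)$ be a time scale. Your Case-2 step requires evaluating $f^{-1}$ at $\sigma(s)\in\T$, so you must fix an extension, e.g.\ $f^{-1}(y):=\inf\{u\in[0,c]_\T:f(u)\ge y\}$; with that choice the monotonicity computation above survives verbatim. This is exactly the defect that the present paper repairs in Theorem \ref{yng3} by assuming $\widetilde{\T}=f(\T)$ is a time scale and integrating $f^{-1}$ over $\widetilde{\T}$ with a nabla integral, and it is why the equality discussion there involves $f^\rho(a)$ and $\sigma(f^{-1}(b))$ rather than the naive $b=f(a)$.
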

\noindent If $\T=\Z$ and $f(t)=t$, then Theorem \ref{amlthm} says that
\begin{equation}\label{wongexample}
 ab \le \sum_{t=0}^{a-1}(t+1) + \sum_{y=0}^{b-1}(y+1) = \frac{1}{2}\left(a(a+1)+b(b+1)\right)
\end{equation}
holds. Note that an if and only if clause concerning an actual equality is missing in the formulation in Theorem \ref{amlthm}, with equality impossible in the simple example \eqref{wongexample} except for the trivial case $a=0=b$. This omission was rectified in \cite{and} via the following theorem.

% Theorem 1.2

\begin{theorem}[Anderson]\label{yng2}
Let $\T$ be any time scale (unbounded above) with $0\in\T$. Further, suppose that $f:[0,\infty)_{\T}\rightarrow\R$ is a real-valued function satisfying
\begin{enumerate}
 \item $f(0)=0$;
 \item $f$ is continuous on $[0,\infty)_\T$, right-dense continuous at $0$;
 \item $f$ is strictly increasing on $[0,\infty)_\T$ such that $f(\T)$ is also a time scale.
\end{enumerate}
Then for any $a\in[0,\infty)_\T$ and $b\in[0,\infty)\cap f(\T)$, we have
$$ ab \le \frac{1}{2}\int_0^a \left[f(t)+f(\sigma(t))\right]\Delta t + \frac{1}{2}\int_0^b \left[f^{-1}(y)+f^{-1}(\sigma(y))\right]\Delta y, $$
with equality if and only if $b=f(a)$.
\end{theorem}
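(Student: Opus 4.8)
The plan is to fix $a\in[0,\infty)_\T$ and to regard the right-hand side minus $ab$ as a function of the free variable $b$ ranging over the time scale $\tilde\T:=f(\T)$. Write $g:=f^{-1}$, which by hypothesis (iii) is strictly increasing and continuous on $\tilde\T$, and let $\tilde\sigma$ and the superscript $\tilde\Delta$ denote the forward jump operator and the delta operations associated with $\tilde\T$. Set
$$ \Phi(b):=\frac12\int_0^a\big[f(t)+f(\sigma(t))\big]\Delta t+\frac12\int_0^b\big[g(y)+g(\tilde\sigma(y))\big]\tilde\Delta y-ab. $$
The goal becomes $\Phi(b)\ge 0$ for all $b\in[0,\infty)\cap\tilde\T$, with equality exactly at $b=f(a)$. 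Since the first integral is a fixed constant and $g$ and $g\circ\tilde\sigma$ are rd-continuous, the fundamental theorem of calculus on time scales gives the clean derivative $\Phi^{\tilde\Delta}(b)=\tfrac12\big[g(b)+g(\tilde\sigma(b))\big]-a$.

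First I would pin down the equality case by showing $\Phi(f(a))=0$. The essential tool is that $f:\T\to\tilde\T$ is an order isomorphism, so it intertwines the jump operators, $\tilde\sigma\circ f=f\circ\sigma$, and satisfies $g\circ f=\mathrm{id}$. Applying the time-scale substitution rule (Bohner--Peterson) with $y=f(t)$ to the second integral evaluated at $b=f(a)$ turns $\tilde\Delta y$ into $f^\Delta(t)\Delta t$ and the integrand $\tfrac12[g(y)+g(\tilde\sigma(y))]$ into $\tfrac12[t+\sigma(t)]$, so that
$$ \frac12\int_0^{f(a)}\big[g(y)+g(\tilde\sigma(y))\big]\tilde\Delta y=\frac12\int_0^a\big[t+\sigma(t)\big]f^\Delta(t)\Delta t. $$
Adding the (constant) first integral, the total integrand is $\tfrac12[f(t)+f(\sigma(t))]+\tfrac12[t+\sigma(t)]f^\Delta(t)$, which by the two equivalent forms of the product rule, $(tf)^\Delta=f(t)+\sigma(t)f^\Delta(t)=f(\sigma(t))+t\,f^\Delta(t)$, equals exactly $(tf)^\Delta$. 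Hence the value of the right-hand side at $b=f(a)$ is $\int_0^a(tf)^\Delta\Delta t=af(a)$, which cancels the subtracted $ab=af(a)$ and gives $\Phi(f(a))=0$.

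With the base point established, I would finish by a monotonicity argument driven by the sign of $\Phi^{\tilde\Delta}$. Because $g$ is strictly increasing with $g(f(a))=a$, one has $g(b)<a$ whenever $b<f(a)$; moreover $f(a)\in\tilde\T$ forces $\tilde\sigma(b)\le f(a)$ for every such $b$, so $g(\tilde\sigma(b))\le a$ and hence $\Phi^{\tilde\Delta}(b)<0$ on $[0,f(a))_{\tilde\T}$. Symmetrically, for $b\ge f(a)$ both $g(b)\ge a$ and $g(\tilde\sigma(b))\ge a$, giving $\Phi^{\tilde\Delta}(b)\ge 0$, with strict inequality once $b>f(a)$. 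Integrating $\Phi^{\tilde\Delta}$ from $f(a)$ to $b$ then makes the two one-sided comparisons rigorous: $\Phi$ strictly decreases up to $b=f(a)$ and does not decrease thereafter, so its unique minimum is the value $0$, attained only at $b=f(a)$, which is precisely the asserted inequality together with its equality clause.

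The step I expect to be the \emph{main obstacle} is the substitution identity in the second paragraph. It presupposes that $f$ is delta-differentiable with rd-continuous derivative, so that both the substitution rule and the product rule apply; this is not automatic for a merely continuous, strictly increasing function at right-dense points. Justifying it—or routing around it, for instance by first establishing the identity on the right-scattered part and passing to a limit, or by invoking a version of the substitution theorem valid under the stated continuity together with $f(\T)$ being a time scale—is where the real care lies. A secondary delicacy is the bookkeeping at right-dense versus right-scattered points of $\tilde\T$ needed to secure the strict inequality off $b=f(a)$.
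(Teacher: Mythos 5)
Your plan correctly splits the theorem into two claims: (a) the vanishing of $\Phi$ at $b=f(a)$, and (b) the sign analysis of $\Phi^{\tilde\Delta}$ on either side of $f(a)$; claim (b) is handled correctly. But claim (a) --- which is the heart of the theorem --- is not actually proved, and you say so yourself. Both tools you invoke for it, the time-scales substitution rule (which turns $\tilde\Delta y$ into $f^\Delta(t)\,\Delta t$) and the product rule $(tf(t))^\Delta=f(t)+\sigma(t)f^\Delta(t)$, presuppose that $f$ is delta differentiable with rd-continuous derivative. The hypotheses of Theorem \ref{yng2} supply only continuity and strict monotonicity, and these do not imply delta differentiability at right-dense points (already on $\T=\R$ a continuous, strictly increasing function can fail to be differentiable on a dense set, e.g.\ $x$ plus the Cantor function). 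Flagging this as ``the main obstacle'' does not discharge it: without $\Phi(f(a))=0$ your monotonicity argument only locates the minimum of $\Phi$ at $b=f(a)$, it does not show that minimum value is $0$, so neither the inequality nor the equality clause follows.

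The missing argument is precisely the one this paper uses at the corresponding step of its own result --- step (II) of the proof of Theorem \ref{yng3}. (Note the paper never reproves Theorem \ref{yng2}; it quotes it from Anderson's earlier article, whose proof uses the same device.) The idea is to never differentiate $f$: instead, differentiate the equality case with respect to $a$. Let $\varphi(a)$ denote the right-hand side of the inequality evaluated at $b=f(a)$, minus $af(a)$. At a right-scattered point $a$ one computes $\varphi(\sigma(a))-\varphi(a)$ exactly: since $f$ intertwines the jump operators, $\tilde\sigma(f(a))=f(\sigma(a))$, the two integral increments are $\frac12[\sigma(a)-a][f(a)+f(\sigma(a))]$ and $\frac12[f(\sigma(a))-f(a)][a+\sigma(a)]$, whose sum is $\sigma(a)f(\sigma(a))-af(a)$, so $\varphi(\sigma(a))=\varphi(a)$, i.e.\ $\varphi^\Delta(a)=0$. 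At a right-dense point $a$, monotonicity of $f$ and $f^{-1}$ alone squeezes the difference quotient: for $a_n\downarrow a$ in $\T$ one gets $(a_n-a)\left[f(a)-f(a_n)\right]\le\varphi(a_n)-\varphi(a)\le(a_n-a)\left[f(a_n)-f(a)\right]$, and continuity of $f$ forces $\varphi^\Delta(a)=0$. Hence $\varphi^\Delta\equiv0$, and since $\varphi(0)=0$ (here $f(0)=0$ is used), the uniqueness theorem for initial value problems gives $\varphi\equiv0$, which is exactly your missing identity $\Phi(f(a))=0$. With that in hand, your derivative-in-$b$ argument (or, even more directly, the identity $\Phi(b)-\Phi(f(a))=\frac12\int_{f(a)}^{b}\left[f^{-1}(y)+f^{-1}(\tilde\sigma(y))-2a\right]\tilde\Delta y$ followed by a sign inspection, which is the analogue of step (I) in the paper's proof of Theorem \ref{yng3}) finishes the theorem together with the equality clause $b=f(a)$.
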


\noindent

Motivated by \cite{em}, in this paper we extend \eqref{emorig} to the general time scales setting while, in the process, simplifying and extending Theorem \ref{yng2} as well. As these results on time scales will include new results in difference equations as an important corollary, we will illustrate our new inequalities using discrete examples with $\T=\Z$.
 
%%%%%%%%%%%%%
%           %
% SECTION 2 %
%           %
%%%%%%%%%%%%%

\section{Theorem Formulation}

We begin this section by introducing a new and improved version of Theorem \ref{yng2} to facilitate the subsequent results. For any time scale $\T$, we have the following result.

% Theorem 2.1 %

\begin{theorem}[Young's Inequality on Time Scales]\label{yng3}
Let $\T$ be any time scale with $\alpha_1\in\T$ and $\sup\T=\infty$. Further, suppose that $f:[\alpha_1,\infty)_{\T}\rightarrow\R$ is a real-valued function satisfying
\begin{enumerate}
 \item $f(\alpha_1)=\beta_1$;
 \item $f$ is continuous on $[\alpha_1,\infty)_\T$, right-dense continuous at $\alpha_1$;
 \item $f$ is strictly increasing on $[\alpha_1,\infty)_\T$ such that $\widetilde{\T}:=f(\T)$ is also a time scale.
\end{enumerate}
Then for any $a\in\left[\alpha_1,\infty\right)_\T$ and $b\in\left[\beta_1,\infty\right)_{\widetilde{\T}}$, we have
\begin{equation}\label{yeq2}
 ab \le \int_{\alpha_1}^a f(t)\Delta t + \int_{\beta_1}^b f^{-1}(y)\widetilde{\nabla} y + \alpha_1\beta_1,
\end{equation}
with equality if and only if $b\in\left\{f^\rho(a),f(a)\right\}$ for fixed $a$, or with equality if and only if $a\in\left\{f^{-1}(b),\sigma(f^{-1}(b))\right\}$ for fixed $b$. The inequality in \eqref{yeq2} is reversed if $f$ is strictly decreasing.
\end{theorem}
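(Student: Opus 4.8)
The plan is to reduce \eqref{yeq2} to a single sharp identity that captures the equality case, and then to a one-line sign estimate. Concretely, I would first prove the \emph{tiling identity}
\begin{equation*}
\int_{\alpha_1}^a f(t)\Delta t + \int_{\beta_1}^{f(a)} f^{-1}(y)\widetilde{\nabla} y = af(a) - \alpha_1\beta_1,
\end{equation*}
which asserts that when $b=f(a)$ the two staircase areas exactly fill the region of area $af(a)-\alpha_1\beta_1$. Granting this, and writing $a\bigl(b-f(a)\bigr) = \int_{f(a)}^b a\,\widetilde{\nabla}y$, the defect in \eqref{yeq2} collapses to
\begin{equation*}
\int_{\alpha_1}^a f(t)\Delta t + \int_{\beta_1}^b f^{-1}(y)\widetilde{\nabla}y + \alpha_1\beta_1 - ab = \int_{f(a)}^b \bigl[f^{-1}(y)-a\bigr]\widetilde{\nabla}y.
\end{equation*}

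The sign of the reduced expression is then immediate from monotonicity. Since $f^{-1}$ is strictly increasing with $f^{-1}(f(a))=a$, the integrand $f^{-1}(y)-a$ is nonnegative for $y>f(a)$ and nonpositive for $y<f(a)$; hence the integral on the right is nonnegative whether $b\ge f(a)$ or $b\le f(a)$ (reversing the limits in the latter case), which proves \eqref{yeq2}. The decreasing case follows by the identical argument after reversing the orientation of the inverse, which flips the inequality.

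The main work, and the step I expect to be the chief obstacle, is the tiling identity. I would establish it by the induction principle on time scales, verifying it at $a=\alpha_1$ (where both integrals vanish and $f(\alpha_1)=\beta_1$) and then showing the two sides advance in lockstep. Across a right-scattered jump one has $\int_a^{\sigma(a)} f\,\Delta t = \mu(a)f(a)$ and, using that $f$ sends consecutive points of $\T$ to consecutive points so that $f(\sigma(a))=\sigma(f(a))$ in $\widetilde{\T}$, the single-gap contribution $\int_{f(a)}^{f(\sigma(a))} f^{-1}\widetilde{\nabla}y = \nu\bigl(f(\sigma(a))\bigr)\,\sigma(a)$; these sum to exactly $\sigma(a)f(\sigma(a))-af(a)$, so the identity is preserved. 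On a maximal real subinterval of $\T$ the delta and nabla integrals reduce to ordinary Riemann integrals and the identity is the classical area decomposition for a continuous strictly increasing function, requiring no differentiability of $f$. Patching the scattered and dense pieces together via the induction principle yields the identity globally; the delicate point is confirming that $f$ carries dense subintervals of $\T$ to dense subintervals of $\widetilde{\T}$ and matches the two integration measures gap for gap.

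Finally I would read off the equality conditions directly from the reduced formula. When $b\ge f(a)$ the integrand is strictly positive for every $y>f(a)$, so the nabla integral vanishes only at $b=f(a)$. When $b\le f(a)$, the integrand vanishes at the right endpoint $y=f(a)$, so the single backward gap $(f^\rho(a),f(a)]$ contributes $\nu(f(a))\cdot 0 = 0$; thus equality persists down to $b=f^\rho(a)$ but fails once $b<f^\rho(a)$, where a strictly negative contribution appears (from the value $f^{-1}(f^\rho(a))-a=\rho(a)-a<0$ or from a dense interval below $f(a)$). This yields equality precisely for $b\in\{f^\rho(a),f(a)\}$, and interchanging the roles of $f$ and $f^{-1}$, together with $\Delta$ and $\widetilde{\nabla}$, gives the dual characterization $a\in\{f^{-1}(b),\sigma(f^{-1}(b))\}$ for fixed $b$.
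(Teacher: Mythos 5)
Your overall strategy coincides with the paper's own proof: your ``tiling identity'' is exactly the paper's Step (II), namely that $\varphi(a):=F(a,f(a))=0$, and your reduction of the defect to $\int_{f(a)}^{b}\left[f^{-1}(y)-a\right]\widetilde{\nabla} y$ together with the monotonicity sign analysis is its Step (I), including the same characterization of equality via the backward gap $(f^\rho(a),f(a)]$. The difference is in how the tiling identity is proved, and that is where your argument has a genuine gap.

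You establish the identity by induction on the time scale, treating two kinds of pieces: right-scattered jumps (your computation there is correct, including the fact that $f(\sigma(a))$ is the successor of $f(a)$ in $\widetilde{\T}$) and ``maximal real subintervals'' of $\T$, where you invoke the classical area decomposition. But a general time scale does not decompose into isolated points and real intervals: $\T$ may contain Cantor-like portions, in which points are right-dense and/or left-dense yet no neighborhood of them in $\T$ contains any interval. At such a right-dense point your interval argument says nothing, and the dense-point clauses of the induction principle --- propagating the statement into a right neighborhood of a right-dense point, and closing up at left-dense points --- are left unverified; the ``delicate point'' is not matching measures gap for gap, because at a Cantor-like point there is no gap and no interval to match. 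The paper fills precisely this hole by a different device: at a right-dense point $a$ it proves $\varphi^\Delta(a)=0$ by a two-sided squeeze, bounding $\varphi(a_n)-\varphi(a)$ between $(a_n-a)\left[f(a)-f(a_n)\right]$ and $(a_n-a)\left[f(a_n)-f(a)\right]$ using only the monotonicity of $f$ and $f^{-1}$, and then concludes $\varphi\equiv 0$ from $\varphi(\alpha_1)=0$ via the uniqueness theorem for initial value problems. To repair your proof you need such an estimate (or an equivalent local argument) at dense points that are not interior to real intervals; once you add it, your induction effectively collapses into the paper's derivative-plus-uniqueness argument.
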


\begin{proof}
The proof is modeled after the one given on $\R$ in \cite{diaz}. Note that $f$ is delta integrable and $f^{-1}$ is nabla integrable by the continuity assumption in (ii). For simplicity, define
\begin{equation}\label{Fab}
 F(a,b):=\int_{\alpha_1}^a f(t)\Delta t + \int_{\beta_1}^b f^{-1}(y)\widetilde{\nabla} y - ab + \alpha_1\beta_1.
\end{equation}
Then, the inequality to be shown is just $F(a,b)\ge 0$.

(I). We will first show that
$$ F(a,b)\ge F(a,f(a)) \quad\text{for}\quad a\in[\alpha_1,\infty)_\T \quad\text{and}\quad b\in[\beta_1,\infty)_{\widetilde{\T}}, $$
with equality if and only if $b\in\left\{f^\rho(a),f(a)\right\}$. For any such $a$ and $b$ we have
\begin{equation}\label{differncefs}
 F(a,b) - F(a,f(a)) = \int_{f(a)}^{b}\left[f^{-1}(y)-a\right]\widetilde{\nabla} y.
\end{equation}
Clearly if $b=f(a)$ then the integrals are empty, and if $b=f^\rho(a)$ then
$$ F(a,f^\rho(a)) - F(a,f(a)) = \int^{f(a)}_{f^\rho(a)}\left[a-f^{-1}(y)\right]\widetilde{\nabla} y = [f(a)-f^\rho(a)][a-f^{-1}(f(a))] = 0. $$
Otherwise, since $f^{-1}(y)$ is continuous and strictly increasing for $y\in\widetilde{\T}$, the integrals in \eqref{differncefs} are strictly positive
for $b<f^\rho(a)$ and $b>f(a)$.

(II). We will next show that $F(a,f(a))=F(a,f^\rho(a))=0$. For brevity, put $\varphi(a)=F(a,f(a))$, that is
$$ \varphi(a):=\int_{\alpha_1}^a f(t)\Delta t + \int_{\beta_1}^{f(a)} f^{-1}(y)\widetilde{\nabla} y - af(a) + \alpha_1\beta_1. $$
First, assume $a$ is a right-scattered point. Then
\begin{eqnarray*}
 \varphi^\sigma(a)-\varphi(a) 
 &=& \int_a^{\sigma(a)} f(t)\Delta t + \int_{f(a)}^{f^\sigma(a)} f^{-1}(y)\widetilde{\nabla} y - \sigma(a)f^\sigma(a) + af(a) \\
 &=& [\sigma(a)-a]f(a) + \left[f^\sigma(a)-f(a)\right]f^{-1}(f^\sigma(a)) - \sigma(a)f^\sigma(a) + af(a) = 0.
\end{eqnarray*}
Therefore, if $a$ is a right-scattered point, then $\varphi^\Delta(a)=0$. Next, assume $a$ is a right-dense point. Let $\{a_n\}_{n\in\N}\subset[a,\infty)_\T$ be a decreasing sequence converging to $a$. Then
\begin{eqnarray*}
 \varphi(a_n)-\varphi(a) &=& \int_{a}^{a_n} f(t)\Delta t + \int_{f(a)}^{f(a_n)} f^{-1}(y) \widetilde{\nabla} y  - a_nf(a_n) + af(a) \\
 &\ge& (a_n-a)f(a)+[f(a_n)-f(a)]a - a_nf(a_n) + af(a) \\
 & = & (a_n-a)\left[f(a)-f(a_n)\right], 
\end{eqnarray*}
since the functions $f$ and $f^{-1}$ are strictly increasing.
Similarly, 
\begin{eqnarray*}
 \varphi(a_n)-\varphi(a) 
 &\le& (a_n-a)f(a_n)+[f(a_n)-f(a)]a_n - a_nf(a_n) + af(a) \\
 & = & (a_n-a)\left[f(a_n)-f(a)\right].
\end{eqnarray*}
Therefore, 
$$ 0=\lim_{n\rightarrow\infty}\left[f(a)-f(a_n)\right] \le \lim_{n\rightarrow\infty}\frac{\varphi(a_n)-\varphi(a)}{a_n-a}\le \lim_{n\rightarrow\infty}\left[f(a_n)-f(a)\right]=0. $$
It follows that $\varphi^\Delta(a)$ exists, and $\varphi^\Delta(a)=0$ for right-dense $a$ as well. In other words, in either case, $\varphi^\Delta(a)=0$ for $a\in[\alpha_1,\infty)_\T$. As $\varphi(\alpha_1)=0$, by the uniqueness theorem for initial value problems we have that $\varphi(a)=0$ for all $a\in[\alpha_1,\infty)_\T$. 
From earlier we know that $F(a,f^\rho(a))=F(a,f(a))$. Thus as an overall result, we have that
$$ F(a,b)\ge F(a,f(a))=0, $$
with equality if and only if $b=f(a)$ or $b=f^\rho(a)$, as claimed. The case with $a\in\left\{f^{-1}(b),\sigma(f^{-1}(b))\right\}$ for fixed $b$ is similar and thus omitted. If $f$ is strictly decreasing, it is straightforward to see that the inequality in \eqref{yeq2} is reversed; the details are left to the reader.
\end{proof}

We now focus on establishing an upper bound for Young's integral. Before we state and prove our main theorem, we need an auxiliary result via the following lemma.

% Lemma 2.2 %

\begin{lemma}\label{lemma22}
Let $f$ satisfy the hypotheses of Theorem $\ref{yng3}$, and let $F(a,b)$ be given as in \eqref{Fab}. For every $a,\alpha\in\T$ and $b,\beta\in\widetilde{\T}$ we have
\begin{equation}\label{em4}
 F(a,b)+F(\alpha,\beta) \ge - (\alpha-a)(\beta-b),
\end{equation}
where equality holds if and only if $\alpha\in\left\{f^{-1}(b),\sigma(f^{-1}(b))\right\}$ and $\beta\in\left\{f^\rho(a),f(a)\right\}$.
\end{lemma}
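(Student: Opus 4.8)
The plan is to exploit the bilinear structure of $F$ by swapping the second arguments of the two terms on the left-hand side of \eqref{em4}. First I would record the purely algebraic identity
\[
 F(a,b)+F(\alpha,\beta) = F(\alpha,b)+F(a,\beta) - (\alpha-a)(\beta-b),
\]
which follows by writing out all four quantities using \eqref{Fab}. Each of the integrals $\int_{\alpha_1}^{a}f$, $\int_{\alpha_1}^{\alpha}f$, $\int_{\beta_1}^{b}f^{-1}$, $\int_{\beta_1}^{\beta}f^{-1}$ and each constant $\alpha_1\beta_1$ appears exactly once on both sides, so they cancel from the identity; only the product terms require checking, and there one verifies directly that $-ab-\alpha\beta = -\alpha b - a\beta - (\alpha-a)(\beta-b)$. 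No analytic input is needed for this step—it is just bookkeeping of the non-integral terms.

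Next I would invoke Theorem \ref{yng3}. Since $a,\alpha\in[\alpha_1,\infty)_\T$ and $b,\beta\in[\beta_1,\infty)_{\widetilde{\T}}$, the two cross terms $F(\alpha,b)$ and $F(a,\beta)$ each fall under the hypotheses of that theorem, so both are nonnegative. Dropping them from the identity above immediately gives
\[
 F(a,b)+F(\alpha,\beta) \ge -(\alpha-a)(\beta-b),
\]
which is precisely \eqref{em4}. Thus the entire inequality reduces to the nonnegativity of $F$ already established.

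For the equality clause, I observe that equality in the last display holds if and only if $F(\alpha,b)=0$ and $F(a,\beta)=0$ hold simultaneously, since both cross terms are nonnegative. Here the two equality characterizations of Theorem \ref{yng3} are applied with the fixed variable chosen appropriately in each term: treating $b$ as fixed in $F(\alpha,b)$ yields $F(\alpha,b)=0$ if and only if $\alpha\in\{f^{-1}(b),\sigma(f^{-1}(b))\}$, while treating $a$ as fixed in $F(a,\beta)$ yields $F(a,\beta)=0$ if and only if $\beta\in\{f^\rho(a),f(a)\}$. Intersecting these two conditions produces exactly the stated equality condition. I do not expect a serious obstacle here once the swap identity is spotted; the only point demanding care is matching the roles of the ``fixed'' variable in each cross term to the correct clause of Theorem \ref{yng3}, so that the resulting membership sets line up with the claim.
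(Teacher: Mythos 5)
Your proof is correct and is essentially the paper's own argument: the paper likewise applies Theorem \ref{yng3} to the crossed pairs $(a,\beta)$ and $(\alpha,b)$ and rearranges, which is exactly your swap identity $F(a,b)+F(\alpha,\beta)=F(\alpha,b)+F(a,\beta)-(\alpha-a)(\beta-b)$, with the same treatment of the equality cases.
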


\begin{proof}
Fix $a\in\T$ and $b\in\widetilde{\T}$. By Young's integral inequality on time scales (Theorem \ref{yng3}) we have
\begin{equation}\label{em5}
 \int_{\alpha_1}^a f(t)\Delta t + \int_{\beta_1}^{\beta} f^{-1}(y)\widetilde{\nabla} y + \alpha_1\beta_1 \ge a\beta, \quad\text{and}
\end{equation}
\begin{equation}\label{em6}
 \int_{\alpha_1}^{\alpha} f(t)\Delta t + \int_{\beta_1}^{b} f^{-1}(y)\widetilde{\nabla} y + \alpha_1\beta_1 \ge \alpha b,
\end{equation}
with equality if and only if $\beta\in\left\{f^\rho(a),f(a)\right\}$ and $\alpha\in\left\{f^{-1}(b),\sigma(f^{-1}(b))\right\}$, respectively.
By rearranging it follows that
\begin{eqnarray*}
&& \left(\int_{\alpha_1}^a f(t)\Delta t + \int_{\beta_1}^b f^{-1}(y)\widetilde{\nabla} y - ab + \alpha_1\beta_1\right) 
+ \left(\int_{\alpha_1}^{\alpha} f(t)\Delta t + \int_{\beta_1}^{\beta} f^{-1}(y)\widetilde{\nabla} y - \alpha\beta + \alpha_1\beta_1\right) \\
&& = \left(\int_{\alpha_1}^a f(t)\Delta t + \int_{\beta_1}^{\beta} f^{-1}(y)\widetilde{\nabla} y + \alpha_1\beta_1\right) + \left(\int_{\alpha_1}^{\alpha} f(t)\Delta t + \int_{\beta_1}^{b} f^{-1}(y)\widetilde{\nabla} y + \alpha_1\beta_1\right) - ab - \alpha\beta \\
&& \ge a\beta + \alpha b - ab - \alpha\beta = - \left(\alpha-a\right)\left(\beta-b\right).
\end{eqnarray*}
Note that equality holds here if and only if it holds in \eqref{em5} and \eqref{em6}, videlicet if and only if $\alpha\in\left\{f^{-1}(b),\sigma(f^{-1}(b))\right\}$ and $\beta\in\left\{f^\rho(a),f(a)\right\}$.
\end{proof}

% Theorem 2.3 %

\begin{theorem}\label{newthm1}
Let $\T$ be any time scale and $f:[\alpha_1,\alpha_2]_\T\rightarrow[\beta_1,\beta_2]_{\widetilde{\T}}$ be a continuous strictly increasing function such that $\widetilde{\T}=f(\T)$ is also a time scale. Then for every $a,\widehat{a}\in[\alpha_1,\alpha_2]_\T$ and $b,\widehat{b}\in[\beta_1,\beta_2]_{\widetilde{\T}}$ we have
\begin{equation}\label{em3}
 \left(f^{-1}(\widehat{b})-\widehat{a}\right)\left(f^\rho(\widehat{a})-\widehat{b}\right) 
 \le \int_{\widehat{a}}^a f(t)\Delta t + \int_{\widehat{b}}^b f^{-1}(y)\widetilde{\nabla} y - ab + \widehat{a}\widehat{b} 
 \le \left(f^{-1}(b)-a\right)\left(b-f^\rho(a)\right),
\end{equation}
where the equalities hold if and only if $\widehat{b}\in\left\{f^\rho(\widehat{a}),f(\widehat{a})\right\}$ and $b\in\left\{f^\rho(a),f(a)\right\}$. The inequalities are reversed if $f$ is strictly decreasing.
\end{theorem}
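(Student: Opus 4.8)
The plan is to push everything through the single functional $F$ from \eqref{Fab} together with Young's inequality on time scales. First I would record that the central member of \eqref{em3} is nothing but $F(a,b)-F(\widehat{a},\widehat{b})$: writing out \eqref{Fab} at $(a,b)$ and at $(\widehat{a},\widehat{b})$, the basepoint contributions $\alpha_1\beta_1$ cancel and each integral splits as $\int_{\widehat{a}}^{a}=\int_{\alpha_1}^{a}-\int_{\alpha_1}^{\widehat{a}}$ (and similarly for the nabla integral), leaving exactly $\int_{\widehat{a}}^{a}f\,\Delta t+\int_{\widehat{b}}^{b}f^{-1}\,\widetilde{\nabla}y-ab+\widehat{a}\widehat{b}$. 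Thus \eqref{em3} is the assertion
\[
 (f^{-1}(\widehat{b})-\widehat{a})(f^\rho(\widehat{a})-\widehat{b})\le F(a,b)-F(\widehat{a},\widehat{b})\le (f^{-1}(b)-a)(b-f^\rho(a)).
\]
Since Theorem \ref{yng3} already supplies $F\ge 0$ on the whole domain, I would reduce both halves to one ``reverse Young'' estimate, say $(\star)$: for all admissible $a,b$,
\[
 0\le F(a,b)\le (f^{-1}(b)-a)(b-f^\rho(a)),
\]
which is the time-scale form of Minguzzi's \eqref{emorig}, sharpened by replacing $f(a)$ with its predecessor value $f^\rho(a)$.

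The heart of the argument is $(\star)$, and here I would use the representation already isolated inside the proof of Theorem \ref{yng3}: combining \eqref{differncefs} with $F(a,f(a))=0$ gives $F(a,b)=\int_{f(a)}^{b}(f^{-1}(y)-a)\,\widetilde{\nabla}y$. I would then estimate this in the two possible regimes for $b\in\widetilde{\T}$. If $b\ge f(a)$, the integrand is nonnegative and increasing with maximum $f^{-1}(b)-a$, so the nabla integral is at most $(f^{-1}(b)-a)(b-f(a))\le (f^{-1}(b)-a)(b-f^\rho(a))$, the last step because $f^\rho(a)\le f(a)$. If $b\le f^\rho(a)$ (the only alternative, since $f^\rho(a)$ is the point of $\widetilde{\T}$ immediately below $f(a)$), I would write $F(a,b)=\int_{b}^{f(a)}(a-f^{-1}(y))\,\widetilde{\nabla}y$ and note that the topmost nabla-slice over $[f^\rho(a),f(a)]$ contributes $\nu(f(a))(a-f^{-1}(f(a)))=0$; hence the integral collapses to $\int_{b}^{f^\rho(a)}(a-f^{-1}(y))\,\widetilde{\nabla}y$, which is bounded by the maximal integrand value $a-f^{-1}(b)$ times the length $f^\rho(a)-b$, i.e. by $(f^{-1}(b)-a)(b-f^\rho(a))$. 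This vanishing of the top slice is exactly what upgrades the crude bound with $f(a)$ to the sharp one with $f^\rho(a)$, and I expect it to be the main obstacle, both to notice and to phrase cleanly across dense and scattered points.

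With $(\star)$ in hand the two halves of \eqref{em3} are immediate: for the upper bound, $F(\widehat{a},\widehat{b})\ge 0$ gives $F(a,b)-F(\widehat{a},\widehat{b})\le F(a,b)\le (f^{-1}(b)-a)(b-f^\rho(a))$; for the lower bound, $F(a,b)\ge 0$ together with $(\star)$ at $(\widehat{a},\widehat{b})$ yields $F(a,b)-F(\widehat{a},\widehat{b})\ge -F(\widehat{a},\widehat{b})\ge -(f^{-1}(\widehat{b})-\widehat{a})(\widehat{b}-f^\rho(\widehat{a}))=(f^{-1}(\widehat{b})-\widehat{a})(f^\rho(\widehat{a})-\widehat{b})$. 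For the equality clause I would observe that each outer product vanishes precisely when its argument lies in $\{f^\rho(\cdot),f(\cdot)\}$, which by Theorem \ref{yng3} is exactly the set on which $F=0$; consequently both inequalities in \eqref{em3} collapse to equalities simultaneously if and only if $b\in\{f^\rho(a),f(a)\}$ and $\widehat{b}\in\{f^\rho(\widehat{a}),f(\widehat{a})\}$. Finally, the strictly decreasing case follows verbatim from the reversed forms of Theorem \ref{yng3} and $(\star)$, which simply reverse \eqref{em3}.
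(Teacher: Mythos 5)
Your proposal is correct, but it proves the crucial upper bound by a genuinely different route than the paper. Both proofs share the same outer skeleton: recognize the middle of \eqref{em3} as $F(a,b)-F(\widehat{a},\widehat{b})$, reduce everything to the two-sided estimate $0\le F(a,b)\le\left(f^{-1}(b)-a\right)\left(b-f^\rho(a)\right)$ (your $(\star)$, the paper's \eqref{eq28}), and then combine it at $(a,b)$ and $(\widehat{a},\widehat{b})$. The difference is how $(\star)$ is obtained. The paper first proves Lemma \ref{lemma22}, $F(a,b)+F(\alpha,\beta)\ge -(\alpha-a)(\beta-b)$, by applying Theorem \ref{yng3} twice at the crossed pairs $(a,\beta)$ and $(\alpha,b)$, and then specializes $\alpha=f^{-1}(b)$, $\beta=f^\rho(a)$, where the lemma holds with equality; since $F\left(f^{-1}(b),f^\rho(a)\right)\ge 0$, the upper bound drops out with no case analysis, and the same lemma with other admissible choices of $(\alpha,\beta)$ yields the three companion bounds listed in the paper's subsequent remark. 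You instead estimate the representation $F(a,b)=\int_{f(a)}^{b}\left(f^{-1}(y)-a\right)\widetilde{\nabla} y$ directly, splitting into $b\ge f(a)$ and $b\le f^\rho(a)$, and your observation that the top nabla-slice $\widetilde{\nu}(f(a))\left(a-f^{-1}(f(a))\right)$ vanishes is exactly the mechanism that sharpens the crude $f(a)$ bound to the $f^\rho(a)$ bound; this is more self-contained and makes the role of nabla integration (evaluation at right endpoints) transparent, at the cost of a two-case argument and the implicit use of the order-isomorphism property $f\circ\rho=\widetilde{\rho}\circ f$, which your phrase ``the point of $\widetilde{\T}$ immediately below $f(a)$'' quietly invokes and which deserves a line of justification. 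Two small points to tighten: your equality discussion should explicitly trace the inequality chains (equality on the left of \eqref{em3} forces $F(a,b)=0$, equality on the right forces $F(\widehat{a},\widehat{b})=0$, and then Theorem \ref{yng3} finishes), since the vanishing of the outer products alone gives only one direction; and note that both you and the paper apply Theorem \ref{yng3}, stated on $[\alpha_1,\infty)_\T$, to a function defined on a compact interval, a restriction/extension step the paper also glosses over.
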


\begin{proof}
Considering $F$ as in \eqref{Fab}, and \eqref{em4} with $\alpha=f^{-1}(b)$ and $\beta=f^\rho(a)$ we have the equality
$$  F(a,b)+F\left(f^{-1}(b),f^\rho(a)\right) = - \left(f^{-1}(b)-a\right)\left(f^\rho(a)-b\right). $$
As $f^{-1}(b)\in[\alpha_1,\alpha_2]_\T$ and $f^\rho(a)\in[\beta_1,\beta_2]_{\widetilde{\T}}$, via Young's integral inequality on time scales (Theorem \ref{yng3} above) we see that $F\left(f^{-1}(b),f^\rho(a)\right) \ge 0$. Consequently we have that
\begin{equation}\label{eq28}
 0 \le F(a,b) \le - \left(f^{-1}(b)-a\right)\left(f^\rho(a)-b\right); 
\end{equation}
note that equality holds if and only if $b\in\left\{f^\rho(a),f(a)\right\}$. Thus for any $\widehat{a}\in[\alpha_1,\alpha_2]_\T$ and $\widehat{b}\in[\beta_1,\beta_2]_{\widetilde{\T}}$ we have from \eqref{eq28} that
\begin{equation}\label{eq29} 
 0 \le - \left(f^{-1}(\widehat{b})-\widehat{a}\right)\left(f^\rho(\widehat{a})-\widehat{b}\right)-F(\widehat{a},\widehat{b}), 
\end{equation}
with equality if and only if $\widehat{b}\in\left\{f^\rho(\widehat{a}),f(\widehat{a})\right\}$.
Combining inequalities \eqref{eq28} and \eqref{eq29} we get
\begin{eqnarray*} 
 0 &\le& F(a,b) - \left(f^{-1}(\widehat{b})-\widehat{a}\right)\left(f^\rho(\widehat{a})-\widehat{b}\right)-F(\widehat{a},\widehat{b}) \\
   &\le& - \left(f^{-1}(b)-a\right)\left(f^\rho(a)-b\right) - \left(f^{-1}(\widehat{b})-\widehat{a}\right)\left(f^\rho(\widehat{a})-\widehat{b}\right)-F(\widehat{a},\widehat{b}).
\end{eqnarray*}
This can be rewritten as \eqref{em3}. If $f$ is strictly decreasing the proof is similar and thus omitted.
\end{proof}

% Remark 2.4 %

\begin{remark}
In \cite[Theorem 1.1]{em} the author assumes $f(\widehat{a})=\widehat{b}$ $($see \eqref{emorig} above$)$, so that the inequality in \eqref{em3} is new for $\T=\R$, as well as for $\T=\Z$ and general time scales. 
\end{remark}

% Remark 2.5 %

\begin{remark}
Due to Lemma $\ref{lemma22}$ and the first line of the proof of Theorem $\ref{newthm1}$, there are three other inequalities we could write in place of \eqref{em3}, namely 
\begin{eqnarray*}
  \left(f^{-1}(\widehat{b})-\widehat{a}\right)\left(f(\widehat{a})-\widehat{b}\right) 
 &\le& \int_{\widehat{a}}^a f(t)\Delta t + \int_{\widehat{b}}^b f^{-1}(y)\widetilde{\nabla} y - ab + \widehat{a}\widehat{b} 
 \le \left(f^{-1}(b)-a\right)\left(b-f(a)\right), \\
  \left(\sigma(f^{-1}(\widehat{b}))-\widehat{a}\right)\left(f^\rho(\widehat{a})-\widehat{b}\right) 
 &\le& \int_{\widehat{a}}^a f(t)\Delta t + \int_{\widehat{b}}^b f^{-1}(y)\widetilde{\nabla} y - ab + \widehat{a}\widehat{b} 
 \le \left(\sigma(f^{-1}(b))-a\right)\left(b-f^\rho(a)\right), \\
  \left(\sigma(f^{-1}(\widehat{b}))-\widehat{a}\right)\left(f(\widehat{a})-\widehat{b}\right) 
 &\le& \int_{\widehat{a}}^a f(t)\Delta t + \int_{\widehat{b}}^b f^{-1}(y)\widetilde{\nabla} y - ab + \widehat{a}\widehat{b} 
 \le \left(\sigma(f^{-1}(b))-a\right)\left(b-f(a)\right).
\end{eqnarray*}
If we focus on just the upper bounds, for $a>\widehat{a}$, $b>\widehat{b}$, and $b \ge f(a)$ we have the least of these upper bounds, leading to
$$ \int_{\widehat{a}}^a f(t)\Delta t + \int_{\widehat{b}}^b f^{-1}(y)\widetilde{\nabla} y - ab + \widehat{a}\widehat{b} 
 \le \left(f^{-1}(b)-a\right)\left(b-f(a)\right), $$
whereas for $a>\widehat{a}$, $b>\widehat{b}$, and $b \le f^\rho(a)$ we have
$$ \int_{\widehat{a}}^a f(t)\Delta t + \int_{\widehat{b}}^b f^{-1}(y)\widetilde{\nabla} y - ab + \widehat{a}\widehat{b} 
 \le \left(\sigma(f^{-1}(b))-a\right)\left(b-f^\rho(a)\right). $$
\end{remark}

% Corollary 2.6 %

\begin{cor}\label{cor26}
Pick $a_i\in\T$ with $a_i<a_{i+1}$. Let $f_1:[\rho(a_1),a_{2}]_\T\rightarrow\R$ and $f_i:[a_i,a_{i+1}]_\T\rightarrow\R$ be continuous strictly monotone functions for $i=2,\cdots,m$. Assume $f:\T\rightarrow\R$ is continuous, where 
$$ f(t):=f_i(t) \quad\text{and}\quad f^{-1}(y):=f_i^{-1}(y) $$ 
for $t\in[a_i,a_{i+1}]_\T$ and $y\in[\min\{f(a_i),f(a_{i+1})\},\max\{f(a_i),f(a_{i+1})\}]_{f(\T)}$, respectively, and $i=1,2,\cdots,m$, that is to say $f_i(a_{i+1})=f_{i+1}(a_{i+1})$ for $i=1,2,\cdots,m-1$. Set 
\begin{eqnarray*}
 && I_f:=\int_{a_1}^{a_{m+1}} f(t)\Delta t + \int_{b_1}^{b_{m+1}} f^{-1}(y)\widetilde{\nabla} y - a_{m+1}b_{m+1} + a_1b_1, \quad\text{and} \\
 && K_{i}:=\left(a_{i}-f^{-1}(b_{i})\right)\left(f^\rho(a_{i})-b_{i}\right), \quad i\in\{1,m+1\},
\end{eqnarray*}
where $f^{-1}(b_1)\in[a_1,a_2]_\T$ and $f^{-1}(b_{m+1})\in[a_m,a_{m+1}]_\T$. Then we have the following.
\begin{enumerate}
\item If $f_1$ and $f_m$ are both strictly increasing, then $$-K_{1}\le I_f \le K_{m+1}.$$ The inequalities are reversed if $f_1$ and $f_m$ are both strictly decreasing.
\item If $f_1$ is strictly increasing and $f_m$ is strictly decreasing, then $$K_{m+1}-K_{1} \le I_f \le 0.$$ The inequalities are reversed if $f_1$ is strictly decreasing and $f_m$ is strictly increasing.
\end{enumerate}
In all cases, the equalities hold if and only if $b_1\in\left\{f^\rho(a_1),f(a_1)\right\}$ and $b_{m+1}\in\left\{f^\rho(a_{m+1}),f(a_{m+1})\right\}$. 
\end{cor}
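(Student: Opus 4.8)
The plan is to reduce the piecewise statement to a telescoping decomposition of $I_f$ together with repeated application of the two-sided bound in Theorem \ref{newthm1} on the individual monotone pieces. First I would introduce the intermediate values $b_i:=f(a_i)$ for $i=2,\dots,m$; the continuity gluing $f_i(a_{i+1})=f_{i+1}(a_{i+1})$ makes each such $b_i$ unambiguous and guarantees that the piecewise inverse sends $b_i\mapsto a_i$ consistently across adjacent pieces. With this notation I would set, for $i\in\{1,\dots,m\}$,
$$ I_i:=\int_{a_i}^{a_{i+1}} f(t)\Delta t + \int_{b_i}^{b_{i+1}} f^{-1}(y)\widetilde{\nabla} y - a_{i+1}b_{i+1} + a_ib_i, $$
interpreting $\int_{b_1}^{b_{m+1}} f^{-1}(y)\widetilde{\nabla} y$ as the sum of the signed piecewise integrals $\sum_{i=1}^m\int_{b_i}^{b_{i+1}} f_i^{-1}(y)\widetilde{\nabla} y$. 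Then the $f$-integral telescopes to $\int_{a_1}^{a_{m+1}} f\,\Delta t$, the $f^{-1}$-integral telescopes to $\int_{b_1}^{b_{m+1}} f^{-1}\widetilde{\nabla} y$, and the boundary products telescope to $-a_{m+1}b_{m+1}+a_1b_1$, yielding the key identity $I_f=\sum_{i=1}^m I_i$.

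Next I would dispose of the interior terms. For $2\le i\le m-1$ both endpoints are exact values, $b_i=f(a_i)$ and $b_{i+1}=f(a_{i+1})$, so Theorem \ref{newthm1} applied to $f_i$ on $[a_i,a_{i+1}]_\T$ collapses both of its bounds to zero and forces $I_i=0$; hence $I_f=I_1+I_m$. It then remains to estimate the two end pieces. For the first piece I would invoke Theorem \ref{newthm1} for $f_1$ on $[\rho(a_1),a_2]_\T$ (the left endpoint $\rho(a_1)$ is included precisely so that $f^\rho(a_1)$ is defined) with $\widehat{a}=a_1$, $\widehat{b}=b_1$, $a=a_2$, $b=b_2=f(a_2)$: since $b_2=f(a_2)$ the upper bound vanishes while the lower bound is exactly $-K_1$, giving $-K_1\le I_1\le 0$ when $f_1$ is increasing (inequalities reversed when $f_1$ is decreasing, so $0\le I_1\le -K_1$). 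Symmetrically, for the last piece I would take $\widehat{a}=a_m$, $\widehat{b}=b_m=f(a_m)$, $a=a_{m+1}$, $b=b_{m+1}$; here $b_m=f(a_m)$ annihilates the lower bound while the upper bound equals $K_{m+1}$, so $0\le I_m\le K_{m+1}$ for increasing $f_m$ and $K_{m+1}\le I_m\le 0$ for decreasing $f_m$.

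Finally I would add the two end-piece estimates according to the monotonicity of $f_1$ and $f_m$. Both increasing gives $-K_1\le I_1\le 0$ and $0\le I_m\le K_{m+1}$, summing to (i); both decreasing flips every inequality to $K_{m+1}\le I_f\le -K_1$. With $f_1$ increasing and $f_m$ decreasing, $-K_1\le I_1\le 0$ and $K_{m+1}\le I_m\le 0$ sum to $K_{m+1}-K_1\le I_f\le 0$, which is (ii), and the last combination is its reversal; the case $m=1$ is just Theorem \ref{newthm1} itself. The equality clause follows by propagating the equality conditions of Theorem \ref{newthm1}: the interior terms vanish unconditionally, and for $I_1$ (resp.\ $I_m$) the hypothesis $b_2=f(a_2)$ (resp.\ $b_m=f(a_m)$) already holds, so equality reduces to $b_1\in\{f^\rho(a_1),f(a_1)\}$ and $b_{m+1}\in\{f^\rho(a_{m+1}),f(a_{m+1})\}$. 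I expect the main difficulty to be organizational rather than analytic: one must check that the piecewise inverse and the orientation of the nabla integral behave so that the telescoping identity $I_f=\sum_i I_i$ holds verbatim even when increasing and decreasing pieces are interleaved (so that the $b_i$ need not be monotone in $i$), and that the sign reversals of Theorem \ref{newthm1} on decreasing pieces are tracked consistently when the end-piece bounds are combined.
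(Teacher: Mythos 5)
Your proposal is correct and takes essentially the same route as the paper's own proof: both decompose $I_f$ piece by piece, apply Theorem \ref{newthm1} to the first and last pieces (where the exact endpoint $b_2=f(a_2)$, resp.\ $b_m=f(a_m)$, kills one of the two bounds, giving $-K_1\le I_1\le 0$ and $0\le I_m\le K_{m+1}$ in the increasing case), observe that the interior pieces contribute exactly zero, and add. The only difference is expository: you write out the telescoping identity, the remaining monotonicity cases, and the equality clause explicitly, whereas the paper proves only the first part of (i) and declares the rest similar.
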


\begin{proof}
We will only prove the first part of (i), as the other parts follow in a similar manner from Theorem \ref{newthm1}. Assume $f_1$ and $f_m$ are both strictly increasing. By Theorem \ref{newthm1} we have the inequalities
$$ \left(f^{-1}(b_1)-a_1\right)\left(f^\rho(a_1)-b_1\right) 
 \le \int_{a_1}^{a_2} f(t)\Delta t + \int_{b_1}^{f(a_2)} f^{-1}(y)\widetilde{\nabla} y - a_2f(a_2) + a_1b_1 \le 0, $$
the equalities
$$ 0 = \int_{a_i}^{a_{i+1}} f(t)\Delta t + \int_{f(a_i)}^{f(a_{i+1})} f^{-1}(y)\widetilde{\nabla} y - a_{i+1}f(a_{i+1}) + a_if(a_i) = 0 $$
for $i=2,\cdots,m-1$, and the inequalities
\begin{eqnarray*}
 0 &\le& \int_{a_m}^{a_{m+1}} f(t)\Delta t + \int_{f(a_m)}^{b_{m+1}} f^{-1}(y)\widetilde{\nabla} y - a_{m+1}b_{m+1} + a_mf(a_m) \\
   &\le& \left(a_{m+1}-f^{-1}(b_{m+1})\right)\left(f^\rho(a_{m+1})-b_{m+1}\right),
\end{eqnarray*}
where equalities hold in the first line if and only if $b_1\in\left\{f^\rho(a_1),f(a_1)\right\}$, and equalities hold in the third if and only if $b_{m+1}\in\left\{f^\rho(a_{m+1}),f(a_{m+1})\right\}$. If we add these expressions together, we obtain $-K_{1}\le I_f \le K_{m+1}$. This completes the proof.
\end{proof}

% Remark 2.7 %

\begin{remark}
Corollary $\ref{cor26}$ for continuous piecewise-monotone functions is new even for $\T=\R$, as well as for $\T=\Z$ and general time scales. In the next result, Theorem $\ref{pieces}$, we extend the original Young result for continuous functions to piecewise-continuous piecewise-monotone functions on $\R$.
\end{remark}

% Theorem on R, Thm 2.8 %

\begin{theorem}\label{pieces}
Let $a_i\in\R$ with $a_i<a_{i+1}$, and let $f_i:[a_i,a_{i+1}]\rightarrow\R$ be a continuous strictly monotone function for $i=1,2,\cdots,m$. Let $f:[a_1,a_{m+1}]\rightarrow\R$ be the piecewise-continuous function given by 
$$ f(x):=f_i(x)  \quad\text{and}\quad  f^{-1}(y):=f_i^{-1}(y) $$ 
for $x\in(a_i,a_{i+1})$ and $y\in\left(\min\{f(a_i),f(a_{i+1})\},\max\{f(a_i),f(a_{i+1})\}\right)$, respectively, for $i=1,\cdots,m$, with $f(a_1)=f_1(a_1)$ and $f(a_{m+1})=f_m(a_{m+1})$. Set 
\begin{eqnarray*}
 && F(b_1,b_{m+1}):=\int_{a_1}^{a_{m+1}} f(x)dx + \int_{b_1}^{b_{m+1}} f^{-1}(y)dy - a_{m+1}b_{m+1} + a_1b_1+\sum_{i=2}^{m}a_i\left[f_i(a_i)-f_{i-1}(a_i)\right],  \\
 && K_{i}:=\left(a_{i}-f^{-1}(b_{i})\right)\left(f(a_{i})-b_{i}\right), \quad i\in\{1,m+1\},
\end{eqnarray*}
where $f^{-1}(b_1)\in[a_1,a_2]$ and $f^{-1}(b_{m+1})\in[a_m,a_{m+1}]$. Then we have the following.
\begin{enumerate}
\item If $f_1$ and $f_m$ are both strictly increasing, then $$-K_{1}\le F(b_1,b_{m+1}) \le K_{m+1}.$$ The inequalities are reversed if $f_1$ and $f_m$ are both strictly decreasing.
\item If $f_1$ is strictly increasing and $f_m$ is strictly decreasing, then $$K_{m+1}-K_{1} \le F(b_1,b_{m+1}) \le 0.$$ The inequalities are reversed if $f_1$ is strictly decreasing and $f_m$ is strictly increasing.
\end{enumerate}
In all cases, the equalities hold if and only if $b_1=f(a_1)$ and $b_{m+1}=f(a_{m+1})$. 
\end{theorem}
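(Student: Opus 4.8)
The plan is to reduce everything to the single-piece result, Theorem \ref{newthm1}, applied on each subinterval $[a_i,a_{i+1}]$ and specialized to $\T=\R$, where $\rho(t)=t=\sigma(t)$, so that $f^\rho=f$ and $\widetilde{\nabla} y=dy$; the proof then parallels that of Corollary \ref{cor26}, the only new feature being the treatment of the jumps. First I would record the decomposition of the two integrals appearing in $F(b_1,b_{m+1})$ into per-piece contributions,
\[\int_{a_1}^{a_{m+1}} f(x)\,dx = \sum_{i=1}^m \int_{a_i}^{a_{i+1}} f_i(x)\,dx,\]
and, reading the piecewise definition of $f^{-1}$ as an ordered traversal of the pieces,
\[\int_{b_1}^{b_{m+1}} f^{-1}(y)\,dy = \int_{b_1}^{f_1(a_2)} f_1^{-1}(y)\,dy + \sum_{i=2}^{m-1}\int_{f_i(a_i)}^{f_i(a_{i+1})} f_i^{-1}(y)\,dy + \int_{f_m(a_m)}^{b_{m+1}} f_m^{-1}(y)\,dy.\]

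Next I would apply Theorem \ref{newthm1} to each piece. On $[a_1,a_2]$ take $(\widehat a,\widehat b)=(a_1,b_1)$ and upper endpoint $(a,b)=(a_2,f_1(a_2))$; since the upper bound $(f^{-1}(b)-a)(b-f(a))$ vanishes when $b=f(a)$, this gives $-K_1\le(\text{piece }1)\le 0$. For each interior $i=2,\dots,m-1$, matching endpoints $(\widehat a,\widehat b)=(a_i,f_i(a_i))$ and $(a,b)=(a_{i+1},f_i(a_{i+1}))$ force both bounds to $0$, so that piece contributes exactly $0$; this holds whether $f_i$ increases or decreases, since the decreasing case merely reverses $0\ge\cdot\ge 0$. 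On $[a_m,a_{m+1}]$ take $(\widehat a,\widehat b)=(a_m,f_m(a_m))$ and $(a,b)=(a_{m+1},b_{m+1})$: if $f_m$ increases the lower bound vanishes and we obtain $0\le\cdot\le K_{m+1}$, while if $f_m$ decreases the inequalities reverse to $K_{m+1}\le\cdot\le 0$.

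I would then add the per-piece identities and inequalities. The integral parts reassemble into the two integrals of $F$, and the boundary products $-a_{i+1}f_i(a_{i+1})+a_if_i(a_i)$, together with $a_1b_1$ and $-a_{m+1}b_{m+1}$ from the end pieces, collapse—upon collecting the coefficient of each $a_j$—into exactly $a_1b_1-a_{m+1}b_{m+1}+\sum_{j=2}^m a_j[f_j(a_j)-f_{j-1}(a_j)]$, which is the correction sum built into $F(b_1,b_{m+1})$. Hence the sum of the per-piece quantities is precisely $F(b_1,b_{m+1})$, and adding the bounds yields $-K_1\le F\le K_{m+1}$ in case (i) and $K_{m+1}-K_1\le F\le 0$ in case (ii); the reversed-monotonicity variants follow from the reversal clause of Theorem \ref{newthm1}. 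Since only the first and last pieces carry nontrivial bounds, the equality statement is inherited from them, and on $\R$ the condition $b\in\{f^\rho(a),f(a)\}$ collapses to $b=f(a)$, giving equality iff $b_1=f(a_1)$ and $b_{m+1}=f(a_{m+1})$.

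I expect the main obstacle to be bookkeeping rather than genuine difficulty: confirming that the boundary products telescope into precisely the jump-correction sum $\sum_{i=2}^m a_i[f_i(a_i)-f_{i-1}(a_i)]$, which is the entire purpose of the extra term and the one place where piecewise-continuity (versus the continuity assumed in Corollary \ref{cor26}) enters. A secondary point needing care is the meaning of $\int_{b_1}^{b_{m+1}} f^{-1}(y)\,dy$ when the jumps make the pieces' ranges overlap or leave gaps, so that $f^{-1}$ is not single-valued in $y$ on $[b_1,b_{m+1}]$; the integral must be read as the ordered sum over pieces displayed above, and one should check that the orientation of each inverse integral—backward for a decreasing piece, where $f_i(a_i)>f_i(a_{i+1})$—is consistent with that reading.
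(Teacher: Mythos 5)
Your proposal is correct and follows exactly the paper's approach: the paper's own proof is a two-sentence sketch instructing one to apply Theorem \ref{newthm1} with $\T=\R$ to each piece $f_i$ on $[a_i,a_{i+1}]$ (inequalities for $f_1$ and $f_m$, equalities for the interior pieces) and add the results. Your write-up supplies the bookkeeping the paper omits---in particular the telescoping of the boundary products into the jump-correction sum $\sum_{i=2}^m a_i\left[f_i(a_i)-f_{i-1}(a_i)\right]$ and the ordered-traversal reading of $\int_{b_1}^{b_{m+1}} f^{-1}(y)\,dy$---and does so correctly.
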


\begin{proof}
Apply Theorem \ref{newthm1} on $\T=\R$ to the pieces $f_i$ on $[a_i,a_{i+1}]$, using the appropriate inequalities for $f_1$ and $f_m$, and equalities for $f_i$, $i=2,\cdots,m-1$. Then add up these expressions to get the result.
\end{proof}

% Legendre Theorem 2.9 %

\begin{theorem}\label{legendrethm}
For some delta differentiable function $g$, let $f=g^\Delta$ satisfy the hypotheses of Theorem $\ref{newthm1}$, where $f^{-1}=(g^\Delta)^{-1}=(g^*)^{\widetilde{\nabla}}$ on $\widetilde{\T}=f(\T)$. [If $\T=\R$ the function $g^*$ is known as the Legendre transform of $g$.] If we pick $\widehat{a}\widehat{b}=g(\widehat{a})+g^*(\widehat{b})$, then by Theorem $\ref{newthm1}$ we have
\begin{eqnarray*}
 \left((g^*)^{\widetilde{\nabla}}(\widehat{b})-\widehat{a}\right)\left(g^\nabla(\widehat{a})-\widehat{b}\right) 
 \le g(a) + g^*(b) - ab 
 \le \left((g^*)^{\widetilde{\nabla}}(b)-a\right)\left(b-g^\nabla(a)\right),
\end{eqnarray*}
where the equalities hold if and only if $\widehat{b}\in\left\{g^\nabla(\widehat{a}),g^\Delta(\widehat{a})\right\}$ and $b\in\left\{g^\nabla(a),g^\Delta(a)\right\}$.
\end{theorem}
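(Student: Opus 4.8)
The plan is to recognize Theorem \ref{legendrethm} as a direct specialization of Theorem \ref{newthm1} under the substitutions $f=g^\Delta$ and $f^{-1}=(g^*)^{\widetilde{\nabla}}$, with the two integrals collapsing to differences of $g$ and $g^*$ by the fundamental theorem of calculus. Since $f=g^\Delta$ is assumed to satisfy the hypotheses of Theorem \ref{newthm1}, that theorem applies verbatim, and the whole task reduces to rewriting each of its three pieces.

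First I would evaluate the central quantity of Theorem \ref{newthm1}, namely
$$ \int_{\widehat{a}}^a f(t)\Delta t + \int_{\widehat{b}}^b f^{-1}(y)\widetilde{\nabla} y - ab + \widehat{a}\widehat{b}. $$
The delta fundamental theorem of calculus gives $\int_{\widehat{a}}^a g^\Delta(t)\Delta t = g(a)-g(\widehat{a})$, and the nabla fundamental theorem of calculus on $\widetilde{\T}$ gives $\int_{\widehat{b}}^b (g^*)^{\widetilde{\nabla}}(y)\widetilde{\nabla} y = g^*(b)-g^*(\widehat{b})$. Substituting these and then invoking the normalization $\widehat{a}\widehat{b}=g(\widehat{a})+g^*(\widehat{b})$, the terms $-g(\widehat{a})-g^*(\widehat{b})+\widehat{a}\widehat{b}$ cancel, leaving exactly $g(a)+g^*(b)-ab$ as the middle member of the inequality.

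Next I would rewrite the boundary factors. The quantities $f^{-1}(\widehat{b})$ and $f^{-1}(b)$ are already $(g^*)^{\widetilde{\nabla}}(\widehat{b})$ and $(g^*)^{\widetilde{\nabla}}(b)$ by definition, so the only nontrivial identification is $f^\rho=g^\nabla$. This is the key step, and the one place where care is needed: because $f=g^\Delta$ is continuous (a hypothesis inherited from Theorem \ref{newthm1}), the standard relationship between the delta and nabla derivatives on a time scale yields $g^\nabla(t)=(g^\Delta)^\rho(t)=f^\rho(t)$. With $f^\rho(\widehat{a})=g^\nabla(\widehat{a})$ and $f^\rho(a)=g^\nabla(a)$, the lower and upper bounds of Theorem \ref{newthm1} become precisely the claimed bounds. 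The equality characterization then transfers under the same two identifications: $\widehat{b}\in\{f^\rho(\widehat{a}),f(\widehat{a})\}$ and $b\in\{f^\rho(a),f(a)\}$ read as $\widehat{b}\in\{g^\nabla(\widehat{a}),g^\Delta(\widehat{a})\}$ and $b\in\{g^\nabla(a),g^\Delta(a)\}$.

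The main obstacle is essentially just justifying $g^\nabla=(g^\Delta)^\rho$; everything else is bookkeeping driven by the two fundamental theorems and the normalization. Since this identity is a classical fact in the time-scale calculus whenever $g^\Delta$ is continuous, I would cite it from the standard framework (e.g.\ \cite{bp1}) rather than reprove it, and the theorem follows immediately.
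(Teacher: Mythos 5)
Your proposal is correct and matches the paper's own treatment: the paper gives no separate proof of Theorem \ref{legendrethm}, presenting it as an immediate specialization of Theorem \ref{newthm1} under exactly the substitutions you make ($f=g^\Delta$, $f^{-1}=(g^*)^{\widetilde{\nabla}}$, the fundamental theorems of delta and nabla calculus, and the normalization $\widehat{a}\widehat{b}=g(\widehat{a})+g^*(\widehat{b})$). Your explicit justification of $g^\nabla=(g^\Delta)^\rho=f^\rho$ via the continuity of $g^\Delta$ is the right classical fact and fills in the one detail the paper leaves implicit.
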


In the following theorem we reconsider Theorem \ref{newthm1} above. This allows us to get a Young-type integral inequality without having to find $f^{-1}$.

% Theorem 2.10 % 

\begin{theorem}\label{newthm2}
Let the hypotheses of Theorem $\ref{newthm1}$ hold. Then for any $a,\alpha,\widehat{a},\widehat{\alpha}\in[\alpha_1,\alpha_2]_\T$ we have
\begin{eqnarray}\label{yeq3}
 \left(\widehat{\alpha}-\widehat{a}\right)\left(f^\rho(\widehat{a})-f(\widehat{\alpha})\right) 
 &\le& \int_{\widehat{a}}^a f(t)\Delta t - \int_{\widehat{\alpha}}^{\alpha} f(t)\Delta t + (\alpha-a)f(\alpha) + (\widehat{a}-\widehat{\alpha})f(\widehat{\alpha}) \nonumber  \\
 &\le& \left(\alpha-a\right)\left(f(\alpha)-f^\rho(a)\right),
\end{eqnarray}
where the equalities hold if and only if $\widehat{\alpha}\in\{\rho(\widehat{a}),\widehat{a}\}$ and $\alpha\in\{\rho(a),a\}$.
\end{theorem}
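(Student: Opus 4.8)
The plan is to deduce Theorem \ref{newthm2} directly from Theorem \ref{newthm1} by the substitution $b=f(\alpha)$ and $\widehat{b}=f(\widehat{\alpha})$, which is precisely the device that removes every explicit reference to $f^{-1}$. Since $f$ is a continuous strictly increasing bijection of $[\alpha_1,\alpha_2]_\T$ onto $[\beta_1,\beta_2]_{\widetilde{\T}}$, for any $\alpha,\widehat{\alpha}\in[\alpha_1,\alpha_2]_\T$ the points $b=f(\alpha)$ and $\widehat{b}=f(\widehat{\alpha})$ lie in $[\beta_1,\beta_2]_{\widetilde{\T}}$ and satisfy $f^{-1}(b)=\alpha$ and $f^{-1}(\widehat{b})=\widehat{\alpha}$, so the substitution is legitimate. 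Under it the outer factors of \eqref{em3} become exactly the outer expressions of \eqref{yeq3}: the lower bound $\left(f^{-1}(\widehat{b})-\widehat{a}\right)\left(f^\rho(\widehat{a})-\widehat{b}\right)$ turns into $\left(\widehat{\alpha}-\widehat{a}\right)\left(f^\rho(\widehat{a})-f(\widehat{\alpha})\right)$, and the upper bound $\left(f^{-1}(b)-a\right)\left(b-f^\rho(a)\right)$ turns into $\left(\alpha-a\right)\left(f(\alpha)-f^\rho(a)\right)$.

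First I would record the Young equality identity that converts the inverse integral back into a delta integral of $f$. This is exactly the vanishing $F(s,f(s))=0$ established in the proof of Theorem \ref{yng3} (the same equalities used for the middle pieces $i=2,\dots,m-1$ in Corollary \ref{cor26}); written out it reads $\int_{\alpha_1}^{s} f(t)\Delta t + \int_{\beta_1}^{f(s)} f^{-1}(y)\widetilde{\nabla}y = sf(s)-\alpha_1\beta_1$. Applying this at $s=\alpha$ and at $s=\widehat{\alpha}$ and subtracting yields
\[
 \int_{f(\widehat{\alpha})}^{f(\alpha)} f^{-1}(y)\widetilde{\nabla}y = \alpha f(\alpha)-\widehat{\alpha}f(\widehat{\alpha})-\int_{\widehat{\alpha}}^{\alpha} f(t)\Delta t.
\]

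Next I would substitute $b=f(\alpha)$ and $\widehat{b}=f(\widehat{\alpha})$ into the middle expression of \eqref{em3}, namely $\int_{\widehat{a}}^a f(t)\Delta t + \int_{\widehat{b}}^b f^{-1}(y)\widetilde{\nabla}y - ab + \widehat{a}\widehat{b}$, and use the displayed identity to eliminate the inverse integral. The terms $-ab=-af(\alpha)$ and $\widehat{a}\widehat{b}=\widehat{a}f(\widehat{\alpha})$ combine with $\alpha f(\alpha)$ and $-\widehat{\alpha}f(\widehat{\alpha})$ to give $(\alpha-a)f(\alpha)+(\widehat{a}-\widehat{\alpha})f(\widehat{\alpha})$, so the middle expression collapses to $\int_{\widehat{a}}^a f(t)\Delta t - \int_{\widehat{\alpha}}^{\alpha} f(t)\Delta t + (\alpha-a)f(\alpha)+(\widehat{a}-\widehat{\alpha})f(\widehat{\alpha})$, matching \eqref{yeq3}. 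This step is essentially bookkeeping, and I expect the only place demanding real care to be keeping the signs and the swapped limits straight when rewriting the nabla integral — that is the main (if modest) obstacle.

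Finally I would translate the equality conditions. In \eqref{em3} equality holds iff $\widehat{b}\in\left\{f^\rho(\widehat{a}),f(\widehat{a})\right\}$ and $b\in\left\{f^\rho(a),f(a)\right\}$; since $f^\rho=f\circ\rho$ and $f$ is injective, substituting $\widehat{b}=f(\widehat{\alpha})$ and $b=f(\alpha)$ and cancelling $f$ yields $\widehat{\alpha}\in\{\rho(\widehat{a}),\widehat{a}\}$ and $\alpha\in\{\rho(a),a\}$, which is the claimed characterization. The strictly decreasing case, were it needed, would follow by the identical substitution applied to the reversed form of Theorem \ref{newthm1}.
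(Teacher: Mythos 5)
Your proposal is correct and follows essentially the same route as the paper: substitute $b=f(\alpha)$, $\widehat{b}=f(\widehat{\alpha})$ into \eqref{em3} and eliminate the inverse integral via the identity $\int_{f(\widehat{\alpha})}^{f(\alpha)} f^{-1}(y)\widetilde{\nabla}y = \alpha f(\alpha)-\widehat{\alpha}f(\widehat{\alpha})-\int_{\widehat{\alpha}}^{\alpha} f(t)\Delta t$, which the paper obtains as the equality case of Theorem \ref{newthm1} and you obtain (equivalently) from $F(s,f(s))=0$ in the proof of Theorem \ref{yng3}. Your explicit treatment of the equality conditions via injectivity of $f$ is a point the paper leaves implicit, but it is the same argument.
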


\begin{proof}
By Theorem \ref{newthm1} with $\widehat{a}=\widehat{\alpha}$, $\widehat{b}=f(\widehat{\alpha})$, $a=\alpha$ and $b=f(\alpha)$ we have 
\begin{equation}\label{31pf3}
 \int_{f(\widehat{\alpha})}^{f(\alpha)} f^{-1}(y)\widetilde{\nabla} y = \alpha f(\alpha) - \widehat{\alpha}f(\widehat{\alpha}) -  \int_{\widehat{\alpha}}^{\alpha} f(t)\Delta t
\end{equation}
for any $\alpha,\widehat{\alpha}\in[\alpha_1,\alpha_2]_\T$. Since $\alpha,\widehat{\alpha}\in[\alpha_1,\alpha_2]_\T$ are arbitrary, we substitute \eqref{31pf3} into \eqref{em3} to obtain \eqref{yeq3}.
\end{proof}

% Polynomial Example 2.11 %

\begin{example}
Consider the generalized polynomial functions $h_n(t,s)$ on time scales defined recursively in the following way \cite[Section 1.6]{bp1}:
$$ h_0(t,s)\equiv 1, \quad h_{k+1}(t,s)=\int_s^t h_k(\tau,s)\Delta\tau, \quad t,s\in\T, \quad k\in\N_0. $$
If we take $f(t)=h_n(t,\widehat{\alpha})$ for any $n\in\N$, then by Theorem $\ref{newthm2}$ we have
\begin{eqnarray}\label{exz2}
 \left(\widehat{\alpha}-\widehat{a}\right)h_n(\rho(\widehat{a}),\widehat{\alpha}) 
 &\le& h_{n+1}(a,\widehat{a}) - h_{n+1}(\alpha,\widehat{\alpha}) + (\alpha- a)h_n(\alpha,\widehat{\alpha}) \nonumber  \\
 &\le& \left(\alpha-a\right)\left[h_n(\alpha,\widehat{\alpha})-h_n(\rho(a),\widehat{\alpha})\right],
\end{eqnarray}
for any $a,\widehat{a},\alpha,\widehat{\alpha}\in[\alpha_1,\alpha_2]_\T$, where the equalities hold if and only if $\widehat{\alpha}\in\{\rho(\widehat{a}),\widehat{a}\}$ and $\alpha\in\{\rho(a),a\}$.
\end{example}

%%%%%%%%%%%%%%%%%%%%%%%%%%%%%% 
%                            %
% Section on Z               %
%                            %
%%%%%%%%%%%%%%%%%%%%%%%%%%%%%%

\section{results for difference equations}

In this section we concentrate on the discrete case. For $\T=\Z$ we have the following new discrete results, which are corollaries of the theorems above. Recall that $[\alpha_1,\alpha_2]_\Z=\{\alpha_1,\alpha_1+1,\alpha_1+2,\cdots,\alpha_2-1,\alpha_2\}$. The first two theorems are direct translations to $\T=\Z$ of Theorem \ref{newthm1} and Theorem \ref{newthm2}, respectively.

% 1. Discrete Theorem on Z %

\begin{theorem}\label{newthm1Z}
Let $f:[\alpha_1,\alpha_2]_\Z\rightarrow[\beta_1,\beta_2]_{\widetilde{\Z}}$ be a strictly increasing function, where $\widetilde{\Z}=f(\Z)$. Then for every $a,\widehat{a}\in[\alpha_1,\alpha_2]_\Z$ and $b,\widehat{b}\in[\beta_1,\beta_2]_{\widetilde{\Z}}$ we have
\begin{eqnarray}\label{em3z}
 \left(f^{-1}(\widehat{b})-\widehat{a}\right)\left(f(\widehat{a}-1)-\widehat{b}\right) 
 &\le& \sum_{t=\widehat{a}}^{a-1} f(t) + \sum_{y\in(\widehat{b},b]\cap\widetilde{\Z}} f^{-1}(y)\widetilde{\mu}(y) - ab + \widehat{a}\widehat{b} \nonumber \\
 &\le& \left(f^{-1}(b)-a\right)\left(b-f(a-1)\right),
\end{eqnarray}
where the equalities hold if and only if $\widehat{b}\in\left\{f(\widehat{a}-1),f(\widehat{a})\right\}$ and $b\in\left\{f(a-1),f(a)\right\}$.
\end{theorem}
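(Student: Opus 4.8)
The plan is to derive Theorem~\ref{newthm1Z} as the $\T=\Z$ specialization of Theorem~\ref{newthm1}, since all of the analytic work has already been done there. First I would check that the hypotheses transfer: $\Z$ is a time scale, every point of the finite set $[\alpha_1,\alpha_2]_\Z$ is isolated so $f$ is automatically continuous, $f$ is strictly increasing by assumption, and $\widetilde{\Z}=f(\Z)$ is a finite (hence closed) set, so it is again a time scale. Thus \eqref{em3} applies here verbatim, and the entire task reduces to rewriting its two integrals, the symbol $f^\rho$, and the equality clause in the discrete notation of \eqref{em3z}.

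The routine specializations I would record first. On $\Z$ every point is right- and left-scattered with $\sigma(t)=t+1$, $\rho(t)=t-1$ and constant graininess; hence $f^\rho(a)=f(a-1)$ and $f^\rho(\widehat a)=f(\widehat a-1)$, which turns the two boundary products in \eqref{em3} into the stated lower bound $\bigl(f^{-1}(\widehat b)-\widehat a\bigr)\bigl(f(\widehat a-1)-\widehat b\bigr)$ and upper bound $\bigl(f^{-1}(b)-a\bigr)\bigl(b-f(a-1)\bigr)$, and likewise converts the equality conditions $\widehat b\in\{f^\rho(\widehat a),f(\widehat a)\}$, $b\in\{f^\rho(a),f(a)\}$ into $\widehat b\in\{f(\widehat a-1),f(\widehat a)\}$, $b\in\{f(a-1),f(a)\}$. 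Since the forward graininess on $\Z$ is identically $1$, the delta integral collapses to the left-endpoint sum
$$ \int_{\widehat a}^a f(t)\,\Delta t=\sum_{t\in[\widehat a,a)\cap\Z}\mu(t)\,f(t)=\sum_{t=\widehat a}^{a-1}f(t), $$
matching the first sum in \eqref{em3z}; the oriented-integral convention $\int_{\widehat a}^{a}=-\int_{a}^{\widehat a}$ keeps this valid when $\widehat a>a$, so no case analysis on the ordering of $a,\widehat a$ (or of $b,\widehat b$) is needed.

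The only genuinely non-mechanical step is the nabla integral $\int_{\widehat b}^b f^{-1}(y)\,\widetilde{\nabla}y$, and I expect this to be the main obstacle. The difficulty is that it lives on the \emph{image} time scale $\widetilde{\Z}=f(\Z)$, which, unlike $\Z$, is in general not uniformly spaced: the gap between consecutive nodes $f(j-1)$ and $f(j)$ is dictated by $f$. I would evaluate it by telescoping a nabla antiderivative across the nodes of $\widetilde{\Z}$ lying in $(\widehat b,b]$, which yields
$$ \int_{\widehat b}^{b} f^{-1}(y)\,\widetilde{\nabla}y=\sum_{y\in(\widehat b,b]\cap\widetilde{\Z}}\widetilde{\nu}(y)\,f^{-1}(y), $$
the second sum in \eqref{em3z}, the weight being the graininess of $\widetilde{\Z}$ at each node. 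The care-point here, with no analogue on the equally spaced $\Z$, is that for a nabla integral the relevant weight is the \emph{backward} graininess $\widetilde{\nu}(y)=y-\rho(y)$ of the image scale and the index range is the half-open $(\widehat b,b]$; one must not silently substitute the forward graininess or a left-closed range, since on the non-uniform scale $\widetilde{\Z}$ these produce different values.

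Having rewritten both integrals, the $f^\rho$ terms, and the equality clause, I would substitute all four into the two-sided inequality \eqref{em3}; the result is exactly \eqref{em3z} together with its equality characterization. No new inequality is proved — all of the content is inherited from Theorem~\ref{newthm1} — so once the nabla-integral bookkeeping over $\widetilde{\Z}$ is pinned down correctly, the theorem follows immediately.
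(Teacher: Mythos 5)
Your proposal coincides with the paper's own treatment: the paper gives no separate argument for Theorem~\ref{newthm1Z}, stating only that it is the direct translation of Theorem~\ref{newthm1} to $\T=\Z$, which is exactly the specialization you carry out (with $\sigma(t)=t+1$, $\rho(t)=t-1$, the delta integral collapsing to the left-endpoint sum, and the image-scale nabla integral becoming a weighted sum over $(\widehat{b},b]\cap\widetilde{\Z}$). Your care-point about the nabla weight is also well taken and correct: the weight must be the backward graininess $\widetilde{\nu}(y)=y-\widetilde{\rho}(y)$ of $\widetilde{\Z}$, so the symbol $\widetilde{\mu}(y)$ appearing in \eqref{em3z} can only be read as that backward graininess (a notational slip relative to the paper's own convention $\mu=\sigma(t)-t$), since the forward graininess would make the stated equality cases fail.
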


% 2. Discrete Theorem on Z %

\begin{theorem}\label{newthm2Z}
Let $f:\Z\rightarrow\R$ be a strictly increasing function. Then for any integers $a,\widehat{a},\alpha,\widehat{\alpha}$ we have
\begin{eqnarray}\label{yeq3z}
 \left(\widehat{\alpha}-\widehat{a}\right)\left[f(\widehat{a}-1)-f(\widehat{\alpha})\right] 
 &\le& \sum_{t=\widehat{a}}^{a-1} f(t) - \sum_{t=\widehat{\alpha}}^{\alpha-1} f(t) + (\alpha-a)f(\alpha) + (\widehat{a}-\widehat{\alpha})f(\widehat{\alpha}) \nonumber  \\
 &\le& \left(\alpha-a\right)\left[f(\alpha)-f(a-1)\right],
\end{eqnarray}
where the equalities hold if and only if $\widehat{\alpha}\in\{\widehat{a}-1,\widehat{a}\}$ and $\alpha\in\{a-1,a\}$.
\end{theorem}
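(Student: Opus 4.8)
The plan is to obtain Theorem \ref{newthm2Z} as the specialization of Theorem \ref{newthm2} to the time scale $\T=\Z$, so that essentially all the work lies in translating the time-scale calculus appearing in \eqref{yeq3} into its discrete form. First I would record the standard identities for $\T=\Z$ (see \cite{bp1}): every integer is isolated, so $\sigma(t)=t+1$, $\rho(t)=t-1$, and $\mu(t)=\nu(t)=1$; consequently $f^\rho(t)=f(t-1)$, and for integers $c\le d$ the delta integral collapses to the finite sum $\int_c^d f(t)\,\Delta t=\sum_{t=c}^{d-1}f(t)$ (with the usual sign convention when $c>d$). Continuity and right-dense continuity are automatic on $\Z$, since $\Z$ has no right-dense points.

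Next I would substitute these identities directly into the two-sided inequality \eqref{yeq3}. The left bound $(\widehat{\alpha}-\widehat{a})(f^\rho(\widehat{a})-f(\widehat{\alpha}))$ becomes $(\widehat{\alpha}-\widehat{a})[f(\widehat{a}-1)-f(\widehat{\alpha})]$, the two delta integrals become the sums $\sum_{t=\widehat{a}}^{a-1}f(t)$ and $\sum_{t=\widehat{\alpha}}^{\alpha-1}f(t)$, and the right bound $(\alpha-a)(f(\alpha)-f^\rho(a))$ becomes $(\alpha-a)[f(\alpha)-f(a-1)]$; this is exactly \eqref{yeq3z}. The equality clause transfers verbatim, since $\{\rho(\widehat{a}),\widehat{a}\}=\{\widehat{a}-1,\widehat{a}\}$ and $\{\rho(a),a\}=\{a-1,a\}$.

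The one genuine point to check --- and the only real obstacle --- is that $f$ meets the hypotheses of Theorem \ref{newthm2}, which are inherited from Theorem \ref{newthm1}: continuity (automatic, as noted), strict monotonicity (assumed), and the requirement that $\widetilde{\T}=f(\Z)$ be a time scale, i.e.\ closed in $\R$. For a strictly increasing $f$ defined on all of $\Z$ the image $f(\Z)$ may fail to be closed (it can possess a finite accumulation point), so I would not invoke Theorem \ref{newthm2} globally. Instead, since \eqref{yeq3z} involves only the finitely many integers lying between $\widehat{a}$ and $a$ and between $\widehat{\alpha}$ and $\alpha$, together with the isolated values at $a-1,a,\widehat{a}-1,\widehat{a},\alpha-1,\alpha,\widehat{\alpha}-1,\widehat{\alpha}$, I would fix integers $\alpha_1\le\alpha_2$ enclosing all of these and apply Theorem \ref{newthm2} to the restriction of $f$ to $[\alpha_1,\alpha_2]_\Z$. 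On this finite interval $f([\alpha_1,\alpha_2]_\Z)$ is a finite, hence closed, subset of $\R$ and is therefore trivially a time scale, so the hypotheses hold and the substitution above yields \eqref{yeq3z}. I would also remark that $f^{-1}$ never appears in \eqref{yeq3} itself --- it is eliminated inside the proof of Theorem \ref{newthm2} via the identity \eqref{31pf3} --- so no global invertibility of $f$ is needed for the final statement.
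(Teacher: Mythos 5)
Your proposal is correct and follows essentially the paper's own route: the paper gives no separate proof of Theorem \ref{newthm2Z}, stating only that it is the ``direct translation'' of Theorem \ref{newthm2} to $\T=\Z$ via $\rho(t)=t-1$, $f^\rho(t)=f(t-1)$, and $\int_c^d f(t)\,\Delta t=\sum_{t=c}^{d-1}f(t)$, which is exactly the substitution you perform. Your extra observation---that for a strictly increasing $f$ defined on all of $\Z$ the image $f(\Z)$ need not be closed, so one should apply Theorem \ref{newthm2} to the restriction of $f$ to a finite interval $[\alpha_1,\alpha_2]_\Z$ (whose image is finite, hence a time scale)---addresses a hypothesis the paper silently glosses over, making your version more careful than the original.
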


% 5. Pochhammer Example %

\begin{example}
Consider the Pochhammer lower factorial function 
$$ f_k(t)=t^{\underline{k}}:=t(t-1)\cdots(t-k+1), \quad t,k\in\Z, $$ 
also known as $t$ to the $k$ falling \cite{kp}, or the falling factorial power function \cite{vl}. It is clear that $f_k$ is strictly increasing on the integer interval $[k-1,\infty)_\Z$. By Theorem $\ref{newthm2Z}$ we have
\begin{eqnarray}\label{exz1}
 (a-\alpha)f_k(\alpha) \le \frac{1}{k+1}\left[f_{k+1}(a)-f_{k+1}(\alpha)\right] \le (a-\alpha)f_k(a-1)
\end{eqnarray}
for $a,\alpha\in\{k-1,k,k+1,\cdots\}$, where the equalities hold if and only if $\alpha\in\{a-1,a\}$.
\end{example}

% 4. Exponential Example %

\begin{example}
For any real $B>1$ and any integers $a\ge\alpha\in\Z$ we have
$$ B^{\alpha} \le \frac{B^a-B^{\alpha}}{(a-\alpha)(B-1)} \le B^{a-1}, $$
where the equalities hold if and only if $\alpha\in\{a-1,a\}$.
\end{example}

% 5. Legendre Example %

\begin{example}
Consider Theorem $\ref{legendrethm}$. Let $\T=\Z$ and $f(t)=B^t$ for $B>1$. Then $\widetilde{\T}=B^{\Z}$, and we have $g(t)=\frac{B^t-B^{\alpha}}{B-1}$, $f^{-1}(y)=\log_{B}y$, and $g^*(y)=y\log_{B}y-\frac{y}{B-1}+\beta\left(\alpha+\frac{1}{B-1}-\log_{B}\beta\right)$. It is easy to check that
$f^{-1}=(g^*)^{\widetilde{\nabla}}$ on $\widetilde{\T}$ and $\alpha\beta=g(\alpha)+g^*(\beta)$. Therefore we have the inequalities
\begin{eqnarray*}
 \left(\log_{B} \beta-\alpha\right)\left(B^{\alpha-1}-\beta\right) 
 &\le& \frac{B^a-B^{\alpha}}{B-1} + b\log_{B}b-\frac{b}{B-1}+\beta\left(\alpha+\frac{1}{B-1}-\log_{B}\beta\right) - ab \\
 &\le& \left(\log_{B}b-a\right)\left(b-B^{a-1}\right),
\end{eqnarray*}
where the equalities hold if and only if $\beta\in\left\{B^{\alpha-1},B^{\alpha}\right\}$ and $b\in\left\{B^{a-1},B^a\right\}$.
\end{example}

% 6. Trig Example %

\begin{example}
Let $f(t)=\sin\left[\frac{\pi t}{2k}\right]$ for $k\in\N$. Then $f$ is strictly increasing on $[-k,k]_{\Z}$, so that for any $a\ge\alpha\in[-k,k]_{\Z}$ we have by Theorem $\ref{newthm2Z}$ that
$$ \sin\left[\frac{\alpha\pi}{2k}\right] \le \frac{1}{2(a-\alpha)}\left(\cos\left[\frac{(2\alpha-1)\pi}{4k}\right] - \cos\left[\frac{(2a-1)\pi}{4k}\right]\right)\csc\left[\frac{\pi}{4k}\right] \le \sin\left[\frac{(a-1)\pi}{2k}\right], $$
with equalities if and only if $\alpha\in\{a-1,a\}$.
\end{example}

% 7. n Choose k example %

\begin{example}
Let $f(t)=\binom{t}{k}$ for $t,k\in\N$ with $t\ge k$. Then $f$ is strictly increasing on $[k,\infty)_{\Z}$, whereby for any $a\ge\alpha\in[k,\infty)_{\Z}$ we have
$$ \binom{\alpha}{k} \le \frac{(a-k)\binom{a}{k}-(\alpha-k)\binom{\alpha}{k}}{(a-\alpha)(k+1)} \le \binom{a-1}{k}, $$
with equalities if and only if $\alpha\in\{a-1,a\}$.
\end{example}

%%%%%%%
% bib %
%%%%%%%

\end{document}